\newenvironment{enum}{  
\begin{enumerate}[\upshape(\arabic{section}.\arabic{equation}a)] }  
{  \end{enumerate}   }
\newcommand{\itemref}[2] {{\upshape(\ref{#1}\ref{#2})}}
\theoremstyle{plain}
\newtheorem{Thm}{Theorem}
\newtheorem*{MThm-rev}{Theorem B [expanded statement]}
\newtheorem{Cor}[Thm]{Corollary}
\newtheorem{Prop}[Thm]{Proposition}
\newtheorem{Q}[Thm]{Question}
\theoremstyle{definition}
\newtheorem{Remark}[Thm]{Remark}
\newtheorem{Ex}[Thm]{Example}
\newcommand{\belt}{\mathscr B}
\newcommand{\sss}{\smallskip}
\newcommand{\cut}{\setminus}
\renewcommand{\Re}{\operatorname{Re}}%real part
\renewcommand{\Im}{\operatorname{Im}}%imaginary part
\renewcommand{\bar}{\overline}%bar (wide version often looks better)
\renewcommand{\tilde}{\widetilde}%tilde (wide version often looks better)
\newcommand{\intl}{\int\limits}
\newcommand{\C}{\mathbb C}%complex #s
\newcommand{\R}{\mathbb R}%real #s
\newcommand{\CP}{\mathbb{CP}}%complex projective space
\newcommand{\Z}{\mathbb Z}%integers
\newcommand{\dee}{\partial}%dee
\newcommand{\deebar}{\overline\partial}%deebar
\newcommand{\st}{\,:\,}%such that (can switch in vertical bar instead)
\newcommand{\lam}{\lambda}%lambda
\newcommand{\eps}{\varepsilon}%epsilon
\newcommand{\mapdef}[4]{ #1 &\to #2 \\ #3 &\mapsto #4 }%specify source/target/input/output
\newcommand{\w}{\wedge}%wedge
\newcommand{\rest}{\big\rvert}%restriction
\numberwithin{equation}{section}
\begin{document}
\title[Projective-umbilic points]
{Projective-umbilic points of\\ circular real hypersurfaces in $\C^2$}
\author[David E. Barrett]{David E. Barrett}
\address{Department of Mathematics\\University of Michigan
\\Ann Arbor, MI  48109-1043  USA }
\email{barrett@umich.edu}

\author[Dusty E. Grundmeier]{Dusty E. Grundmeier}
\address{Department of Mathematics \\
Harvard University \\
 Cambridge, MA 02138-2901  USA }
\email{deg@math.harvard.edu}

\thanks{{\em 2010 Mathematics Subject Classification:} 	32V10}

\thanks{The first author was supported in part by NSF grant number DMS-1500142.}

\date{\today}

\begin{abstract}  
We show that the boundary of any  bounded strongly pseudoconvex complete circular domain in $\C^2$ must contain points that are exceptionally tangent to a projective image of the unit sphere.
\end{abstract}

\maketitle

\section{Background}\label{S:back}

% Rutter p. 104: "order of point contact" = "order of contact" + 1

A {\em vertex} of a smooth plane curve may be viewed as a point at which there
is a circle exceptionally tangent to to the curve; that is, there is a circle with fourth-order (or better) contact with the  curve at the vertex in contrast with with the situation of a non-vertex at which third-order contact is the best possible.  (What we are calling the order of contact here is also known as the "point-contact order" [Rut,\S 5.1.1]. An alternate convention -- used for example in the study of jets of functions -- reduces the orders by one.)

The famous four-vertex theorem ([Muk], [Kne], [Oss]) says the every smooth Jordan curve in the plane has at least four vertices.  There are corresponding results giving lower bounds for the size of the set of {\em affine vertices} where a curve is exceptionally tangent to a conic [Muk], for the set of {\em umbilic points} on a smooth non-toric compact surface in $\R^3$ where the surface is exceptionally tangent to a ball (see for example [Ber, pp.\,389-390], noting the unresolved Carath\'eodory conjecture), for the set of points where an orientation-preserving diffeomorphism of the unit circle is exceptionally tangent to a holomorphic automorphism of the unit disk ([Ghy], [OvTa]), and for the set of {\em CR-umbilic points} on the boundary of a real ellipsoid or bounded complete circular domain in $\C^2$ that are exceptionally tangent to a local biholomorphic image of the unit sphere ([HuJi], [EbSo]; note also the counter-example in [ESZ]).
(The order of contact at stake in the last batch of results is seven, and the points in question are those where a certain sixth-order tensor (due to Cartan [Car1], [Car2]) vanishes.)

In the current work we again consider real hypersurfaces in $\C^2$ or the projective space $\CP^2$, looking at orders of contact with projective images of the unit sphere (equivalently, with $\C$-affine images of the unit sphere or the Heisenberg hypersurface $\{\Im z_2=|z_1|^2\}$
).
The special points are the {\em projective-umbilic points} at which third-order contact is possible.

Due to the discrepancy in the collection of allowable maps on the one hand and the order of contact on the other hand, CR-umbilic points need not be projective-umbilic, nor vice versa
(see Example \ref{X:Lp} below.)

We show below that boundary of any bounded strongly pseudoconvex circular domain in $\C^2$ must contain projective-umbilic points.

\section{A Beltrami-style tensor}\label{S:Belt}

The following tensorial object will prove useful: for $S$ a smooth real strongly pseudoconvex hypersurface in $\C^2$ with defining function $r$ we set
\begin{equation}\label{E:belt}
\belt_S\eqdef -\frac{\det
{ \begin{pmatrix}
 0  & r_1  & r_2  \\
r_1  &  r_{1   1} & r_{2   1}  \\
r_{ 2 }  & r_{1   2}  &   r_{2   2} 
\end{pmatrix}
}}
{
\det \begin{pmatrix}
 0  & r_1  & r_2  \\
r_{\bar 1 }  &  r_{1 \bar 1} & r_{2 \bar 1}  \\
r_{\bar 2 }  & r_{1 \bar 2}  &   r_{2 \bar 2}  
\end{pmatrix}}\cdot
\frac{dz_1\w dz_2}{\overset{}{d\bar{z_1}\w d\bar{z_2}}}
\end{equation}
on $S$, where the subscripts denote differentiation.  (The non-vanishing of the denominator here is well-known to be equivalent to the strong pseudoconvexity of $S$
.)
The last factor above is to indicate that this object is to be
viewed as a section of the product of the canonical bundle of $(2,0)$-forms with
the conjugate-inverse of that bundle. We will refer to such objects as Beltrami differentials. (In a one-variable setting this reduces to the reciprocal of the Beltrami differentials $\mu(z)\,\frac{d\bar z}{dz}$ used in particular in the study of quasi-conformal mappings, as for example found in [Leh].  See also \S \ref{S:Rossi} below.)

\begin{Prop}\label{P:belt-basics}
\refstepcounter{equation}\label{N:belt-basics}
\begin{enum}
\item $\belt_S$ does not depend on the choice of defining function $r$. \label{I:def-fcn}
\item If $\psi$ is an automorphism of  $\CP^2$ then $\belt_S=\psi^*\left( \belt_{\psi(S)}\right)$ (where defined). \label{I:belt-inv}
\end{enum}
\end{Prop}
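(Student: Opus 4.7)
\emph{Part (1).} I would take $\tilde r = h r$ with $h$ a smooth nonvanishing function and compute on $S$: $\tilde r_i = h r_i$, $\tilde r_{ij} = h r_{ij} + h_i r_j + h_j r_i$, and similarly $\tilde r_{i\bar j} = h r_{i\bar j} + h_i r_{\bar j} + h_{\bar j} r_i$. In each of the bordered $3\times 3$ matrices of \eqref{E:belt}, the bottom $2\times 2$ block is $h$ times the original plus a symmetric rank-two tensor of the form $a\,v^T + v\,a^T$, where $v$ is exactly the vector appearing in the border. Applying the standard identity
\[
\det\begin{pmatrix} 0 & v^T \\ v & H + a v^T + v a^T \end{pmatrix}
\;=\;\det\begin{pmatrix} 0 & v^T \\ v & H \end{pmatrix}
\]
(proved by subtracting $a_i\cdot(\text{row } 1)$ from row $i{+}1$, and similarly for columns) absorbs the correction; pulling a factor of $h$ from each of the three rows then yields $h^3$ times the original determinant. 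The same $h^3$ appears in numerator and denominator, so the ratio is unchanged.

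\emph{Part (2).} Using part~(1), I would choose $\tilde r := r\circ\psi^{-1}$ as defining function for $\psi(S)$. With $\phi := \psi^{-1}$ the chain rule gives
\[
\tilde r_{ij} = \sum_{k,\ell} r_{k\ell}\,\phi_{k,i}\,\phi_{\ell,j} + C_{ij},
\qquad
\tilde r_{i\bar j} = \sum_{k,\ell} r_{k\bar\ell}\,\phi_{k,i}\,\overline{\phi_{\ell,j}},
\]
where $C_{ij} := \sum_k r_k\phi_{k,ij}$ appears only in the holomorphic Hessian (the mixed Hessian is correction-free because $\phi$ is holomorphic). Block factorization of the denominator matrix by $\operatorname{diag}(1,\bar J_\phi^{\,T})$ on the left and $\operatorname{diag}(1,J_\phi)$ on the right immediately yields $D(\tilde r) = |\det J_\phi|^2\,D(r)$, so the substantive task is to handle $C$ in the numerator.

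\emph{Heart of the argument.} In an affine chart, a projective $\phi$ has the form $\phi_k(w) = (A_{k\cdot}w + a_k)/\Delta(w)$ with $\Delta(w) = c\cdot w + d$. A direct computation using $\phi_{k,ij} = \Delta^{-2}(-A_{ki}c_j - A_{kj}c_i + 2\phi_k c_i c_j)$, simplified via the chain-rule identity $\tilde r_i = (\sum_k r_k A_{ki} - R\,c_i)/\Delta$ with $R := \sum_k r_k\phi_k$, yields the key closed form
\[
C_{ij} \;=\; -\frac{c_i\,\tilde r_j + c_j\,\tilde r_i}{\Delta}.
\]
This is symmetric rank-two with one factor equal to $\nabla\tilde r$, exactly the shape absorbed into the border by the identity of part~(1). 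Hence the numerator determinant transforms as $(\det J_\phi)^2$ times the original, and the coefficient of $\belt_{\psi(S)}$ acquires the factor $(\det J_\phi)^2/|\det J_\phi|^2 = \det J_\phi/\overline{\det J_\phi}$. This is precisely cancelled by the pullback of the form factor $dw_1\w dw_2/d\bar w_1\w d\bar w_2$ under $\psi$ (which contributes its reciprocal at corresponding points), giving $\psi^*\belt_{\psi(S)} = \belt_S$.

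The main obstacle is the closed-form calculation of $C_{ij}$ together with the recognition that $\nabla\tilde r$ appears as a factor of this rank-two tensor --- a structural coincidence requiring the specific projective form of $\phi$. A convenient organizational simplification is to note that $\mathrm{PGL}(3,\C)$ is generated by the affine group (where $\phi_{k,ij}\equiv 0$ trivially) and a single non-affine projective involution such as $(z_1,z_2)\mapsto(1/z_1,\,z_2/z_1)$, restricting the new verification to one explicit case.
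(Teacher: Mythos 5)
Your proposal is correct. Part (1) is exactly the paper's argument (both determinants pick up a factor of $\eta^3$ on $S$), carried out in detail via the bordered-determinant identity that absorbs rank-two corrections of the form $a\,v^T+v\,a^T$ with $v$ the border vector. For part (2) the paper simply reduces to generators of $\Aut(\CP^2)$ (affine maps plus one inversion) and leaves the verification as a "straightforward computation"; your main line instead handles a general fractional-linear map at once by computing the second-order correction $C_{ij}$ in closed form and recognizing it as a rank-two tensor with $\nabla\tilde r$ as a factor, so that the same bordered-determinant identity from part (1) disposes of it. This is a somewhat cleaner and more structural route to the same transformation law, and you correctly note the paper's generator reduction as the alternative organization; the bookkeeping of the factors $(\det J_\phi)^2$ versus $|\det J_\phi|^2$ against the pullback of $dz_1\w dz_2/\overset{}{d\bar z_1\w d\bar z_2}$ is also right.
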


\begin{proof}
For \itemref{N:belt-basics}{I:def-fcn}, check that if $r$ is replaced by $\eta \cdot r$ with $\eta$ non-vanishing then both the numerator and denominator above pick up a factor of $\eta(p)^3$ at $p\in S$.

For \itemref{N:belt-basics}{I:belt-inv}, first recall that automorphisms of  $\CP^2$ have the form 
\begin{equation*}
(z_1,z_2)\mapsto
\left( \frac{D+Ez_1+Fz_2}{A+Bz_1+Cz_2}, \frac{G+Hz_1+Iz_2}{A+Bz_1+Cz_2} \right).
\end{equation*}
with $\begin{pmatrix}
A  & B  &  C \\
D  & E  &  F \\
 G &  H &   I
\end{pmatrix}$
invertible.   The group of these automorphisms is generated by the invertible affine transformations together with the particular transformation
\begin{equation*}
\left(z_1,z_2\right)\mapsto \left(\frac{1}{z_1},\frac{z_2}{z_1}\right);
\end{equation*}
the transformation law can be verified by straightforward computation in either case.
\end{proof}

In view of \itemref{N:belt-basics}{I:belt-inv}, the construction of $\belt_S$ also makes sense on a strongly pseudoconvex real hypersurface in $\CP^2$ -- see [Bar, \S5.3] for a more directly projective approach to the construction and the transformation law.

\begin{Prop}\label{P:sphere}
If $S_{\sf sph}$ is the unit sphere $\left\{(z_1,z_2)\in \C^2 \st |z_1|^2+|z_2|^2=1\right\}$ then $\belt_{S_{\sf sph}}=
0\cdot \frac{dz_1\w dz_2}{\overset{}{d\bar{z_1}\w d\bar{z_2}}}$.
\end{Prop}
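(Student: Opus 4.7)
The plan is simply to compute $\belt_S$ directly from the definition \eqref{E:belt} using the standard defining function $r(z,\bar z)=|z_1|^2+|z_2|^2-1=z_1\bar z_1+z_2\bar z_2-1$ for the unit sphere. By Proposition \ref{P:belt-basics}\itemref{N:belt-basics}{I:def-fcn}, any defining function gives the same answer, so this choice is fine.

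The key observation is that $r$ depends on the holomorphic variables $z_1,z_2$ only through the linear terms $z_1\bar z_1$ and $z_2\bar z_2$. Consequently $r_1=\bar z_1$, $r_2=\bar z_2$, and all pure holomorphic second derivatives vanish:
\begin{equation*}
r_{11}=r_{12}=r_{21}=r_{22}=0.
\end{equation*}
Therefore the numerator matrix in \eqref{E:belt} has the form
\begin{equation*}
\begin{pmatrix}
0 & \bar z_1 & \bar z_2 \\
\bar z_1 & 0 & 0 \\
\bar z_2 & 0 & 0
\end{pmatrix},
\end{equation*}
whose lower-right $2\times 2$ block is identically zero. Expanding along the first row immediately shows that this determinant is zero at every point of $S_{\sf sph}$.

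Meanwhile the denominator determinant is nonzero on $S_{\sf sph}$ by the strong pseudoconvexity of the sphere (as noted in the comment following \eqref{E:belt}), so the whole expression is well-defined and equals zero. Thus $\belt_{S_{\sf sph}}=0\cdot\frac{dz_1\wedge dz_2}{d\bar z_1\wedge d\bar z_2}$, as claimed.

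There is no genuine obstacle here: the result follows from the special feature of the sphere's defining function, namely that it has no pure holomorphic quadratic part. One could alternatively invoke Proposition \ref{P:belt-basics}\itemref{N:belt-basics}{I:belt-inv} and the transitivity of $\mathrm{SU}(2,1)$ on $S_{\sf sph}$ to reduce the computation to a single convenient point, but the direct calculation above is so short that the invariance argument offers no savings.
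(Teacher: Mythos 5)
Your proof is correct and is exactly the paper's approach: the paper's proof simply says "direct computation with $r(z_1,z_2)=|z_1|^2+|z_2|^2-1$," and you have carried out that computation, correctly observing that the vanishing of the pure holomorphic second derivatives kills the numerator while strong pseudoconvexity keeps the denominator nonzero.
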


\begin{proof}
This follows by direct computation with $r(z_1,z_2)=|z_1|^2+|z_2|^2-1$.
\end{proof}

\begin{Cor}\label{P:Heis}
If $S_{\sf Heis}$ is the Heisenberg hypersurface $\left\{(z_1,z_2)\in \C^2 \st \Im z_2=|z_1|^2 \right\}$ then $\belt_{S_{\sf Heis}}=
0\cdot \frac{dz_1\w dz_2}{\overset{}{d\bar{z_1}\w d\bar{z_2}}}$.
\end{Cor}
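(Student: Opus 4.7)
The plan is to deduce this directly from Proposition \ref{P:sphere} and the $\Aut(\CP^2)$-invariance established in Proposition \ref{P:belt-basics}\itemref{N:belt-basics}{I:belt-inv}, exploiting the fact that the Heisenberg hypersurface is, up to a single missing point, the image of the unit sphere under a Cayley-type projective transformation.

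Concretely, I would introduce the map
\begin{equation*}
\psi \colon (w_1,w_2) \longmapsto \left( \frac{w_1}{1-w_2},\; \frac{i(1+w_2)}{1-w_2}\right),
\end{equation*}
which fits the form of the $\CP^2$-automorphism displayed in the proof of Proposition \ref{P:belt-basics} (take $A=1$, $C=-1$, $E=1$, $G=I=i$ and the remaining entries zero; the $3\times 3$ coefficient matrix is then invertible). A brief calculation shows $\Im\!\left(\frac{i(1+w_2)}{1-w_2}\right) = \frac{1-|w_2|^2}{|1-w_2|^2}$ while $\bigl|\frac{w_1}{1-w_2}\bigr|^2 = \frac{|w_1|^2}{|1-w_2|^2}$, so $\psi$ carries $S_{\sf sph}\cap\{w_2\neq 1\}$ into $S_{\sf Heis}$, and the image is a dense open subset of $S_{\sf Heis}$.

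Now apply Proposition \ref{P:belt-basics}\itemref{N:belt-basics}{I:belt-inv} with this $\psi$: on the dense open set in question,
\begin{equation*}
\psi^*\bigl(\belt_{S_{\sf Heis}}\bigr) \;=\; \belt_{S_{\sf sph}} \;=\; 0,
\end{equation*}
the last equality being Proposition \ref{P:sphere}. Since $\psi$ is a local biholomorphism wherever defined, the pullback of a nonzero section cannot vanish, so $\belt_{S_{\sf Heis}}$ vanishes on $\psi(S_{\sf sph}\setminus\{w_2=1\})$. By the smoothness of $\belt_{S_{\sf Heis}}$ and the density of this subset in $S_{\sf Heis}$, we conclude $\belt_{S_{\sf Heis}}\equiv 0$ on the whole Heisenberg hypersurface.

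There is really no serious obstacle here—the main thing to check is that $\psi$ is of the projective form covered by the invariance statement and that it intertwines the two hypersurfaces. (As a sanity check, one could alternatively plug $r(z_1,z_2)=\Im z_2-|z_1|^2=\tfrac{1}{2i}(z_2-\bar z_2)-z_1\bar z_1$ directly into \eqref{E:belt}: the numerator determinant vanishes identically because $r_{11}=r_{12}=r_{22}=0$, forcing the top row's cofactor expansion to contain only factors from a $2\times 2$ zero block, which yields $\belt_{S_{\sf Heis}}=0$ without any appeal to the sphere. But the invariance argument is the natural one given the structure of Section \ref{S:Belt}.)
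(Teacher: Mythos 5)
Your proof is correct and matches the paper's own argument, which likewise invokes Proposition \ref{P:belt-basics}\itemref{N:belt-basics}{I:belt-inv} together with Proposition \ref{P:sphere} via an explicit Cayley-type automorphism of $\CP^2$ interchanging $S_{\sf sph}$ and $S_{\sf Heis}$ (the paper writes the map in the other direction, from $S_{\sf Heis}$ to $S_{\sf sph}$, and also notes the direct-computation alternative you mention parenthetically). No gaps; your verification that the transformation has the required projective form and intertwines the two hypersurfaces is exactly the needed check.
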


\begin{proof}
This can be handled either by direct computation as above or by applying \itemref{N:belt-basics}{I:belt-inv} to the projective automorphism
\begin{align*}
\mapdef{\psi\st S_{\sf Heis}}{S_{\sf sph}}{\left(z_1,z_2\right)}{\left(\frac{2z_1}{i+z_2},\frac{i-z_2}{i+z_2}\right).}
\end{align*}
\end{proof}

In the other direction we have the following result.
\begin{Thm}
\label{T:Bolt}\,  ([Jen], [DeTr], [Bol]).

$\belt_S$ vanishes identically if and only if $S$ is locally a projective image of the unit sphere $S_{\sf sph}$ (or equivalently, of the Heisenberg hypersurface $S_{\sf Heis}$).
\end{Thm}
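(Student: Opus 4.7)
The plan is to prove the two implications separately. The ``if'' direction follows immediately from the material already in place: if $S=\psi(S_{\sf sph})$ locally for some $\psi\in\Aut(\CP^2)$, then Proposition \ref{P:belt-basics}\itemref{N:belt-basics}{I:belt-inv} combined with Proposition \ref{P:sphere} yields $\belt_S=\psi^*(\belt_{S_{\sf sph}})=0$, with the $S_{\sf Heis}$ version covered by Corollary \ref{P:Heis}.

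For the nontrivial converse, assume $\belt_S\equiv 0$ in a neighborhood of some $p\in S$. The strategy is to normalize $S$ by projective transformations and then extract a differential obstruction from the vanishing hypothesis. Since the projective automorphism group of $\CP^2$ acts transitively on pairs (point, tangent hyperplane) of $S_{\sf Heis}$, I first arrange $p=0$ and $T_pS=T_0 S_{\sf Heis}$, presenting $S$ as a graph
\begin{equation*}
\Im z_2 \;=\; |z_1|^2 + F(z_1,\bar z_1,\Re z_2)
\end{equation*}
with $F$ vanishing to order at least three at $0$. The residual isotropy of $(0,T_0 S_{\sf Heis})$ in $\Aut(\CP^2)$ --- a finite-dimensional subgroup, readable off the explicit formula in the proof of Proposition \ref{P:belt-basics} --- is then used to impose a ``projective normal form'' on $F$, killing a specific list of otherwise-redundant Taylor coefficients.

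Substituting $r=|z_1|^2+F-\Im z_2$ into \eqref{E:belt} and expanding, strong pseudoconvexity keeps the denominator nonzero, while the numerator --- after the normal-form relations are applied --- should have a leading term that is a nondegenerate linear expression in the lowest-order surviving jet of $F$. The hypothesis $\belt_S\equiv 0$ then forces that jet to vanish; by propagating the same argument along $S$ (using that $\belt_S\equiv 0$ is a pointwise projective-invariant condition, preserved under changes of base point) one drives all Taylor coefficients of the normalized $F$ to zero, so $S$ coincides with the image of $S_{\sf Heis}$ under a projective automorphism near $p$.

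The hard step will be the explicit verification of the nondegeneracy asserted above: that the leading-order part of $\belt_S$ in the projective normal form genuinely detects every normalized Taylor jet of $F$. This is the content of the Cartan-type rigidity computation underlying the three cited sources [Jen], [DeTr], [Bol]. Once that algebraic input is secured, the induction and the passage from formal Taylor data to an actual projective equivalence go through by standard arguments (either invoking real-analyticity or working first at the formal-power-series level and then matching to the smooth category via the classical theory of projective CR normal forms).
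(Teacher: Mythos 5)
The paper itself offers no proof of this theorem: it is quoted from [Jen], [DeTr], [Bol], so the only thing one can check your proposal against is whether it would stand on its own. Your ``if'' direction does: Proposition \ref{P:belt-basics}\itemref{N:belt-basics}{I:belt-inv} plus Proposition \ref{P:sphere} (and Corollary \ref{P:Heis}) give $\belt_S=\psi^*(0)=0$ immediately. The converse, however, is not proved; you explicitly defer ``the hard step'' to the very sources the theorem cites, which makes the argument circular as a self-contained proof.

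Worse, the nondegeneracy claim on which your sketch rests is false as stated. At a fixed point, $\belt_S$ is a single complex number: in the normal form of Proposition \ref{P:normal} one has $\belt_S(0)=\beta\,\frac{dz_1\w dz_2}{d\bar{z_1}\w d\bar{z_2}}$, so the pointwise value of $\belt_S$ detects only the one real invariant $\beta$ of the $2$-jet and carries no information whatsoever about the higher normalized Taylor coefficients of $F$. Two real equations per point cannot be ``a nondegenerate linear expression in the lowest-order surviving jet of $F$'' once that jet has more than two real parameters. The actual content of the theorem is an integrability statement: one must use the vanishing of $\belt_S$ on a whole neighborhood --- i.e., differentiate the identity $\belt_S\equiv 0$ to all orders, or equivalently run a Cartan moving-frames / structure-equation argument --- to conclude that the hypersurface is projectively spherical. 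This is exactly analogous to the classical fact that a totally umbilic surface is a piece of a sphere, which needs the Codazzi equations and not merely pointwise jet-killing. Your final step (passing from vanishing formal data to genuine local projective equivalence in the smooth category) is likewise asserted rather than argued. So the proposal correctly disposes of the easy implication but leaves the substance of the theorem unproved.
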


\begin{Prop}\label{P:normal}
Let $S$ be a smooth strongly pseudoconvex real hypersurface in $\CP^2$ and let $p$ be a point in $S$.  Then there is an automorphism of $\CP^2$ moving $p$ to $0\in\C^2$ so that the transformed $S$ takes the form 
\begin{equation} \label{E:normal}
\Im z_2= |z_1|^2 + \beta \Re z_1^2 
+ O\left(\|(z_1,\Re z_2)\|^3\right)
\end{equation}
near $0$ with uniquely-determined $\beta\in[0,\infty)$.
\end{Prop}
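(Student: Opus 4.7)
The approach is to successively apply projective automorphisms of $\CP^2$ that normalize finer pieces of the 2-jet of $S$ at $p$.

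First, since $\Aut(\CP^2)$ acts transitively on pairs (point in $\CP^2$, real hyperplane through that point)---transitively on points, and at a fixed point via $\text{GL}(2,\C)$ on real hyperplanes in the tangent space---we may arrange $p\mapsto 0\in\C^2$ and $T_pS\mapsto\{\Im z_2=0\}$. Near $0$, $S$ is then the graph $\Im z_2 = h(z_1,\bar z_1,\Re z_2) + O(3)$ with quadratic part
\[
h_2 = A_{11}|z_1|^2 + 2\Re(A_{20}z_1^2) + 2\Re(B_{10}z_1)\,\Re z_2 + C_{02}(\Re z_2)^2,
\]
where $A_{11}, C_{02}\in\R$, $A_{20}, B_{10}\in\C$, and strong pseudoconvexity gives $A_{11}>0$.

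Second, apply the affine dilation $(z_1,z_2)\mapsto(\lambda z_1,\mu z_2)$ with $\mu>0$ and $\lambda\in\C^*$, which sends $(A_{11}, A_{20})$ to $(\mu A_{11}/|\lambda|^2,\ \mu A_{20}/\lambda^2)$: choose $\mu = |\lambda|^2/A_{11}$ to normalize $A_{11}=1$, then choose $\arg\lambda$ so that $A_{20}$ lands in $[0,\infty)$, producing $2\Re(A_{20}z_1^2) = \beta\Re z_1^2$ for some $\beta\geq 0$. Third, to eliminate the remaining $B_{10}$ and $C_{02}$ terms without disturbing the already-normalized Hermitian and holomorphic-quadratic parts, apply the projective automorphism
\[
(z_1,z_2)\mapsto\left(\frac{z_1}{1+Bz_1+Cz_2},\ \frac{z_2}{1+Bz_1+Cz_2}\right),
\]
whose tangent map at $0$ is the identity. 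A short direct computation shows that at the level of $h_2$ this transformation subtracts $\Im(Bz_1)\,\Re z_2 + (\Im C)(\Re z_2)^2$, so choosing $B = 2iB_{10}$ and $\Im C = C_{02}$ simultaneously kills both cross-terms and yields \eqref{E:normal}.

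For uniqueness of $\beta$, a direct computation from \eqref{E:belt} in the normal form gives $\belt_S(0) = \beta\cdot\frac{dz_1\w dz_2}{d\bar z_1\w d\bar z_2}$. If two normalizing transformations yield parameters $\beta,\beta'\geq 0$ respectively, then their composition (one after the inverse of the other) is a projective automorphism $\psi$ fixing $0$, and \itemref{N:belt-basics}{I:belt-inv} gives $\beta' = \beta\cdot\det D\psi(0)/\overline{\det D\psi(0)}$. Since the right side equals $\beta$ times a unit complex number while $\beta,\beta'$ are both nonnegative reals, $\beta = \beta'$.

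The main delicate point is step three: the naïve affine shear $(z_1,z_2)\mapsto(z_1+bz_2,z_2)$ alone cannot kill $B_{10}$ when $\beta=1$ (the associated real-linear system $\bar b + \beta b = B_{10}$ becomes singular in one direction), so the full rational projective freedom of $\CP^2$ beyond affine transformations is genuinely needed even to normalize the 2-jet.
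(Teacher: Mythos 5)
Your proof is correct, and it is worth noting that the paper does not actually supply an argument here: its ``proof'' of Proposition \ref{P:normal} is a pointer to [Bar, Prop.\,5] and the surrounding discussion. Your write-up therefore provides what the paper outsources, and it does so along the expected lines: reduce the 2-jet step by step using the stabilizer of $(0,T_0S)$ in $\Aut(\CP^2)$, with the linear part handling $A_{11}$ and $A_{20}$ and the fractional-linear part $\bigl(z_1,z_2\bigr)\mapsto\bigl(z_1,z_2\bigr)/(1+Bz_1+Cz_2)$ (whose quadratic Taylor terms are $-(Bz_1+Cz_2)\cdot(z_1,z_2)$) absorbing the $\Re z_2$-cross terms. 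I checked the key computation: on $S$ one has $\Im z_2=O(2)$, so modulo third-order terms $\Im(Bz_1z_2)=\Im(Bz_1)\,\Re z_2$ and $\Im(Cz_2^2)=(\Im C)(\Re z_2)^2$, and since $D\psi(0)=I$ the already-normalized terms $|z_1|^2+\beta\Re z_1^2$ are untouched; your choices $B=2iB_{10}$, $\Im C=C_{02}$ do the job. The uniqueness argument via $\belt_S(0)=\beta\,\frac{dz_1\w dz_2}{d\bar{z_1}\w d\bar{z_2}}$ and Proposition \itemref{N:belt-basics}{I:belt-inv} is legitimate and clean (the unit $\det D\psi(0)/\overline{\det D\psi(0)}$ must be $1$ once both parameters are nonnegative, or both vanish), and it has the side benefit of re-deriving the identification of projective-umbilic points with zeros of $\belt_S$. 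Your closing observation that the affine shear alone fails exactly when $\beta=1$ (the map $b\mapsto\bar b+\beta b$ kills $\Im b$ there) is also correct and is a good justification for invoking the genuinely projective maps. The only point I would tidy: strong pseudoconvexity gives $A_{11}\ne 0$ with a sign depending on the co-orientation, so you should note that $z_2\mapsto -z_2$ (or the choice of the initial linear map) is used to arrange $A_{11}>0$.
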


\begin{proof}
See [Bar, Prop.\,5] and the following discussion.
\end{proof}

(For projective normalization of higher-order terms see [Ham].)

For a hypersurface $S$ of the form \eqref{E:normal} we have (by direct calculation) that
\begin{equation*}\label{E:belt-beta}
\belt_S(0) = \beta\, \frac{dz_1\w dz_2}{\overset{}{d\bar{z_1}\w d\bar{z_2}}};
\end{equation*}
moreover the order of contact between $S$ and $S_{\sf Heis}$ is 
$\begin{cases}
\ge3 &\text{ if }\beta= 0\\
2 &\text{ if }\beta\ne 0.
\end{cases}$
Thus the {\em projective-umbilic points} from the end of \S \ref{S:back} are precisely the points where $\belt_S$ vanishes.

\begin{Ex}\label{X:Lp}
The smooth portion $\{(z_1,z_2)\st |z_1|^p + |z_2|^p=1, z_1z_2\ne 0\}$ of the boundary of the unit $L^p$ ball in $\C^2$ is locally CR-equivalent to the sphere, using a branch of $\left(z_1^{2/p},z_2^{2/p}\right)$, but contains no projective-umbilic points when $p\ne2$ since  
\begin{equation*}
\belt_S = \frac{2-p}{p}\frac{\bar{z_1 z_2}}{z_1 z_2}
\,\frac{dz_1\w dz_2}{\overset{}{d\bar{z_1}\w d\bar{z_2}}}
\end{equation*}
in this case.
\end{Ex}

\section{Main result}\label{S:main}

\begin{Thm}\label{T:main}
If $S$ is the boundary of a  bounded strongly pseudoconvex complete circular domain in $\C^2$ then $S$ contains at least one circle of projective-umbilic points.
\end{Thm}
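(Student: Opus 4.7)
The plan is to exploit the $S^1$ symmetry of $S$ together with a topological obstruction over the quotient $S/S^1$. Since $D$ is complete circular, it contains $0$, and since $D$ is bounded, $0\notin S$. The unitary rotation $\phi_\theta(z_1,z_2) = (e^{i\theta}z_1, e^{i\theta}z_2)$ is therefore a projective automorphism preserving $S$ and acting freely on $S$, and by \itemref{N:belt-basics}{I:belt-inv} we have $\phi_\theta^*\belt_S=\belt_S$.

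Writing $\belt_S = f\cdot\frac{dz_1\wedge dz_2}{d\bar{z_1}\wedge d\bar{z_2}}$ and computing $\phi_\theta^*(dz_1\wedge dz_2)=e^{2i\theta}\,dz_1\wedge dz_2$ together with $\phi_\theta^*(d\bar{z_1}\wedge d\bar{z_2})=e^{-2i\theta}\,d\bar{z_1}\wedge d\bar{z_2}$, this equivariance forces $f\circ\phi_\theta = e^{-4i\theta}f$ on $S$. In particular, $\{f=0\}$ is $S^1$-invariant, so any single zero of $f$ already gives a full circular orbit of projective-umbilic points. It therefore suffices to show that $f$ must vanish somewhere on $S$.

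Because $D$ is bounded, strongly pseudoconvex, and star-shaped with respect to $0$, the hypersurface $S$ is diffeomorphic to $S^3$. The free $S^1$-action makes $S\to S/S^1$ a principal circle bundle, and the homotopy long exact sequence of this fibration forces $S/S^1\cong S^2$; the only principal $S^1$-bundle over $S^2$ with total space $S^3$ is the Hopf fibration. The function $f$ then descends to a smooth section of the associated complex line bundle $L_{-4}$ of weight $-4$, whose first Chern class is $\pm 4\ne 0$. A non-trivial complex line bundle over $S^2$ admits no nowhere-vanishing smooth section, so $f$ must vanish on at least one orbit, producing the desired circle of projective-umbilic points.

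The main obstacle I foresee is really just careful bookkeeping: getting the weight of the equivariance right, and verifying that the principal bundle $S\to S^2$ is genuinely the standard Hopf bundle rather than some lens-space quotient. Both are straightforward once formulated, and the remainder is classical topology. Note that since $|c_1(L_{-4})|=4$, one should generically expect at least four such circles, even though the theorem only claims one.
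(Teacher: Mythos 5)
Your argument is correct, and it reaches the same topological obstruction as the paper's proof by a genuinely different (and chart-free) route. The paper works in the affine coordinate $\zeta=z_1/z_2$ on the orbit space: writing $S\cut\{z_2=0\}$ as $\{z_2\bar{z_2}=e^{\rho(z_1/z_2)}\}$, it computes $\belt_S=b_S(\zeta)\,\frac{\bar{z_2}^2}{z_2^2}\,\frac{dz_1\w dz_2}{d\bar{z_1}\w d\bar{z_2}}$ explicitly in terms of $\rho$, compares with the analogous formula in the chart $1/\zeta$ to get $b_S(\zeta)\approx \tilde b_S(0)\,\bar\zeta^2/\zeta^2$ for $\zeta$ large, and concludes via the logarithmic integral (winding number $-4$) and Stokes' theorem that $b_S$ must vanish. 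Your weight computation $f\circ\phi_\theta=e^{-4i\theta}f$ extracts exactly that integer $-4$ directly from the equivariance of $\belt_S$ under the diagonal rotations, and the identification of $S\to S/S^1$ with the Hopf fibration converts it into $c_1(L_{-4})=\pm4\neq0$. What you gain: no explicit formula for $b_S$, no two-chart comparison, and no case split over whether the umbilic points might sit on the $z_1$-axis (the paper must separately dispose of that case via $\tilde b_S(0)\neq 0$); your version also isolates exactly what is used, namely a free circle action by projective automorphisms on a hypersurface diffeomorphic to $S^3$. What the paper's version buys is the explicit expression for $b_S$ in terms of $\rho$, which drives the examples in \S\ref{S:rmk}. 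The one step you should flesh out is the diffeomorphism $S\cong S^3$: it follows, as in the paper's opening paragraph, from strong pseudoconvexity forcing each complex line through $0$ to meet $S$ transversally in a single circle, so the radial projection onto the standard sphere is an equivariant diffeomorphism. Your closing observation that one should expect four circles (with multiplicity) is consistent with Example \ref{X:bulge}.
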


\begin{proof}
The complete circularity condition implies in particular that $S$ intersects each complex line $L$ through the origin in one circle $C_L$; since $S$ is strongly pseudoconvex,  $L$ cannot be completely tangent to $S$ along $C_L$, so in fact $L$ intersects $S$ transversely along $C_L$.

It follows that we may write
\begin{equation}\label{E:no-z1-ax}
S\cut \{z_2=0\}=\left\{\left(z_1,z_2\right)\st z_2\ne0, z_2\bar{z_2}=e^{\rho\left(z_1/z_2\
\right)}
\right\}
\end{equation}
where $\rho$ is a smooth $\R$-valued function on $\C$. Similarly we may write 
\begin{equation} \label{E:no-z2-ax}
S\cut \{z_1=0\}=\left\{\left(z_1,z_2\right)\st z_1\ne0, z_1\bar{z_1}=e^{\tilde\rho\left(z_2/z_1
\right)}
\right\}.
\end{equation}

Setting $\zeta=z_1/z_2$ we have from \eqref{E:no-z1-ax} that
\begin{equation*}
\belt_S =  b_S(\zeta)
\frac{\bar{z_2}^2}{z_2^2}
\,\frac{dz_1\w dz_2}{\overset{}{d\bar{z_1}\w d\bar{z_2}}}
\end{equation*}
where
$b_S(\zeta)=-\dfrac{\rho_{\zeta\zeta}-\rho_\zeta^2}{\rho_{\zeta\bar\zeta}}$ is $\C$-valued (but not holomorphic).
The strong pseudoconvexity of $S$ guarantees that the denominator $\rho_{\zeta\bar\zeta}$ is non-vanishing for $\zeta\in\C$.

Using \eqref{E:no-z2-ax} instead we have the alternate formula 
\begin{equation*}
\belt_S =  \tilde b_S(1/\zeta)\,
\,\frac{\bar{z_1}^2}{z_1^2}
\frac{dz_1\w dz_2}{\overset{}{d\bar{z_1}\w d\bar{z_2}}}.
\end{equation*}

Comparing the formulae we find that
\begin{align*}
b_S(\zeta) &= \tilde b_S(1/\zeta)\cdot \frac{\bar\zeta^2}{\zeta^2}
\overset{\zeta\text{ large}}\approx \tilde b_S(0) \cdot \frac{\bar\zeta^2}{\zeta^2}
\end{align*}
Assuming that $S$ is not projective-umbilic at points lying on the $z_1$-axis we have $\tilde b_S(0)\ne0$.

It follows that the logarithmic integral $\intl_{|\zeta|=M}\frac{db_S}{b_S}\in 2\pi i\Z$ must equal $-8\pi i$ for $M$ large.  From Stokes' theorem we now see that $b_S$ must have zeros in the disk ${|\zeta|<M}$.
\end{proof}

\section{Comments and examples.}\label{S:rmk}

\begin{Ex}\label{X:bulge}
Consider the hypersurface $S=\left\{\left(z_1,z_2\right)\st \left( |z_1|^2+|z_2|^2\right)^2+|z_1|^4+|z_2|^4=2\right\}$.
Computation reveals that 
\begin{equation*}
\belt_S=-\frac{3\bar{z_1^2z_2^2}}
{2\left(|z_1|^4+4|z_1z_2|^2+|z_2|^4\right)}
\,\frac{dz_1\w dz_2}{\overset{}{d\bar{z_1}\w d\bar{z_2}}},
\end{equation*}
so $\belt_S$ has double (conjugate) 
zeros along each axis.
In fact, $S$ has fourth order contact with $2|z_1|^2+|z_2|^2=2$ along the $z_1$-axis and with 
$|z_1|^2+2|z_2|^2=2$ along the $z_2$-axis.
%see Madison19.nb
\end{Ex}

\begin{Q}\label{Q:q} Suppose that $S\subset\CP^2$ is a not-necessarily-circular compact strongly pseudoconvex real hypersurface satisfying the {\em strong $\C$-convexity condition} $\left|\belt_S\right|<1$ ([APS, Def.\,2.5.10, [Bar], \S\S 5.2-3]).  Must $S$ have a projective-umbilic point? 
\end{Q}

\begin{Ex}
\label{X:torus} The answer to the above question is negative if the 
strong $\C$-convexity condition is dropped.  In fact, the example
\begin{equation*}
\left( \log|z_1| \right)^2 + \left( \log|z_2| \right)^2=\eps^2 
\end{equation*}
from [ESZ] of a compact strongly pseudoconvex hypersurface in $\C^2$ without CR-umbilic points also has no projective-umbilic points when $\eps$ is small.  (The latter claim follows from $\beta_S=-\frac{\bar{z_1z_2}}{z_1z_2}(1+\mathcal{O}(\eps))\,
\frac{dz_1\w dz_2}{\overset{}{d\bar{z_1}\w d\bar{z_2}}}$.)

{\em Note:} The strong $\C$-convexity condition appearing in Question \ref{Q:q} implies in particular that the domain bounded by $S$ is homeomorphic to the unit ball [APS, Thm.\,2.4.2].

\end{Ex}

\begin{Remark}
\label{R:top-geo}
The proof of Theorem \ref{T:main} given above is essentially topological.  In effect, it shows that any Beltrami differential
on $S^3$ that is invariant under rotations $R_\theta\st \left(z_1,z_2\right)\mapsto \left(e^{i\theta_1} z_1, e^{i\theta_2} z_2\right)$ must vanish along at least one circle.

It will not be possible to resolve Question \ref{Q:q} by a purely topological argument; in fact, on any smooth real  hypersurface in $\C^2$ we have the nowhere-vanishing Beltrami differential $\frac{dz_1\w dz_2}{\overset{}{d\bar{z_1}\w d\bar{z_2}}}$.

It is worth noting here that most  of the results
mentioned in \S \ref{S:back} require a proof with genuine geometry, not just topology (though topological arguments often suffice to prove weaker versions).
\end{Remark}

\section{Competing CR structures with the same maximal complex subspace}\label{S:Rossi}

Let $S\subset\C^2$ be a smooth connected real hypersurface with defining function $r$.  The maximal complex subbundle $HS\subset TS$ may be described as $\ker\left(  d^cr\rest_{TS}\right)$ where $d^c=\frac{\dee-\deebar}{2i}$. %Demailley has factor of pi in denom.

Suppose that we have an alternate CR structure on $S$ with the same maximal complex subbundle $HS$.

Let $\omega$ be a nowhere-vanishing 1-form on $S$ that is type $(1,0)$ on $HS$ with respect to the alternate CR structure.  Then $\omega$ may be uniquely decomposed as $\omega'+\omega''$ where $\omega'$ is type (1,0) on each $H_pS$ with respect to the original CR structure and $\omega''$ is type (0,1) on each $H_pS$ with respect to the original CR structure.  We have $|\omega''|<|\omega'|$ if the orientations on $HS$ match and $|\omega''|>|\omega'|$ if they do not match.

The 2-forms $d^cr\w \omega'$ and $d^cr\w \omega''$ may be extended to forms on a neighborhood of $S$ of types (2,0) and (0,2), respectively, with respect to the original CR structure.

Replacing $\omega$  by $\tilde\omega=\lam\omega$ (with $\lam$ nowhere vanishing) has the effect of multiplying the (2,0)- and (0,2)-forms above by $\lam$ along $S$ so that the ratio is unchanged along $S$.

If the orientations on $HS$ match then to avoid a vanishing denominator we should take the (2,0)-form to be the denominator.  By the same reasoning, in the other case we should take the (0,2)-form to be the denominator; this is the situation arising behind the scenes earlier in this paper, where the alternate CR structure is the one induced by projective duality considerations as in \S3 of [BaGr], leading to the ratio \eqref{E:belt} above.  

The alternate CR structure can also be defined by the orientation choice together with a dilation-invariant family of ellipses in each $H_pS$ which correspond to circles for the alternate structure. 
In the case of non-matching orientations 
the magnitude $\left| b(p) \frac{dz_1\w dz_2}{\overset{}{d\bar{z_1}\w d\bar{z_2}}} \right|\eqdef \left| b(p)\right|<1$ determines the major-to-minor axis ratio $\dfrac{1+|b(p)|}{1-|b(p)|}$ for the ellipses in $H_pS$, while  a vector $X\in H_p(S)\cut\{0\}$ points in the direction of the minor axes precisely when $b(p) \frac{dz_1\w dz_2}{\overset{}{d\bar{z_1}\w d\bar{z_2}}}(X,Y)>0$ for some (equivalently, for all) $Y\in T_pS\cut H_pS$ -- see [Bar, \S 5.3].  (Here $T_pS$ is the space of {\em real} tangent vectors to $S$.)    In the case of matching orientations we reach corresponding conclusions starting with $ b(p) \frac{\overset{}{d\bar{z_1}\w d\bar{z_2}}}{dz_1\w dz_2} $  (again with $\left| b(p)\right|<1$).

To provide a concrete illustration we consider a famous example of Rossi [Ros]  (see also [Bur])
of a family of competing CR structures on the unit sphere $S^3$ in $C^2$.

The standard CR structure on $S^3$ can be described by the condition that the CR functions on $S^3$ are those annihilated by the (complex) tangential vector field $\bar L \eqdef z_2 \frac{\dee}{\dee \bar z_1}-z_1 \frac{\dee}{\dee \bar z_2}$.  The CR functions for the alternate structure (which depends on a real parameter $t\in(-1,1)$) are those annihilated by $\bar L + t L$.
The two structures share the same $H_{(z_1,z_2)}S^3=\left\{\left(\gamma \bar z_2, - \gamma \bar z_2)\right)\st \gamma\in \C\right\}$, with matching orientation.  (Here we are viewing $H_{(z_1,z_2)}S^3$ as a  vector subspace of the real tangent space $T_{(z_1,z_2)}\C^2$, which we identify with $\C^2$.)

The function $f=z_1^2+z_2^2 + t\left(\bar z_1^2 + \bar z_2^2\right)$ is CR for the alternate structure, so the map
\begin{align*}
\mapdef{d_{(z_1,z_2)}f\st H_{(z_1,z_2)} S^3}{\C}{\left(\gamma \bar z_2,-\gamma \bar z_1\right)}
{\gamma\left( z_1\bar z_2 - \bar z_1z_2\right)+t\,\bar\gamma\left( \bar z_1 z_2 -  z_1\bar z_2\right)}
\end{align*}
is $\C$-linear for the alternate structure.
So the ellipses we want are given by \[\left| \gamma\left( z_1\bar z_2 - \bar z_1z_2\right)+\bar\gamma\,t\left( \bar z_1 z_2 -  z_1\bar z_2\right) \right|=C^{\sf constant}\]
with the minor axis corresponding to $\gamma\in i\R$ and the major-to-minor axis ratio equal to $\dfrac{1+t}{1-t}$.

(The argument above runs into trouble at points where $d_{(z_1,z_2)}f=0$ on $H_{(z_1,z_2)}S^3$; this happens in particular when one of the $z_j$ is a real multiple of the other.  However, the  conclusions above can still be shown to hold at such points by replacing $f$ by 
$g=z_1^2-z_2^2 - t\left(\bar z_1^2 - \bar z_2^2\right)$
%corrected version of Burns' "Y"
 or by $h=z_1z_2 - t \bar z_1\bar z_2$.)

To compare this conclusions to the geometric discussion of Beltrami differentials above we see that
we must have $\dfrac{1+|b(p)|}{1-|b(p)|}=\dfrac{1+t}{1-t}$ hence $|b|\equiv t$ on $S^3$.  To check the direction of the minor axes we note that the vector
$\left\{\left(\gamma \bar z_2, - \gamma \bar z_1)\right)\st \gamma\in \C\right\} ]\in H_{(z_1,z_2)}S^3$ points in the direction of the minor axes when $\gamma\left( \bar z_1 z_2 -  z_1\bar z_2\right)\in\R$, in particular when $\gamma=i$; thus the vector field $X\eqdef \left(i \bar z_2,-i\bar z_1\right)$ describes the minor axes.

Taking $Y$ to be the rotational vector field 
\begin{align*}
\mapdef{ S^3}{TS^3}{\left(z_1,z_2\right)}{\left(iz_1,iz_2\right)}
\end{align*}
we may rewrite $X$ and $Y$ in operator form 
\begin{align*}
X &= i\left(\bar z_2\dfrac{\dee}{\dee z_1}- \bar z_1\dfrac{\dee}{\dee z_2}-z_2\dfrac{\dee}{\dee \bar z_1} + z_1\dfrac{\dee}{\dee \bar z_2} \right) \\
Y &= i\left(  z_1 \frac{\dee}{\dee z_1} + z_2 \frac{\dee}{\dee z_2}- \bar z_1 \frac{\dee}{\dee \bar z_1} - \bar z_2 \frac{\dee}{\dee \bar z_2} \right).
\end{align*}
Thus $dz_1\w dz_2 (X,Y) = -1 = d\bar z_1\w d\bar z_2$ and 
$ \frac{\overset{}{d\bar{z_1}\w d\bar{z_2}}}{dz_1\w dz_2}(X,Y)>0$.

Combining our conclusions, we see that the Beltrami differential for the Rossi example is 
$t \frac{\overset{}{d\bar{z_1}\w d\bar{z_2}}}{dz_1\w dz_2}$.  (Compare Remark \ref{R:top-geo} above.)

\end{document}